\newcommand{\gothic}{\mathfrak}
\newcommand{\m}{{\gothic{m}}}
\newcommand{\8}{{\infty}}
\newcommand{\depth}{\operatorname{depth}}
\newcommand{\im}{\operatorname{im}}
\newcommand{\pd}{\operatorname{pd}}
\newcommand{\syz}{\operatorname{syz}}
\newcommand{\Tor}{\operatorname{Tor}}
\newcommand{\al}{\alpha}
\newcommand{\brq}{^{[q]}}
\newcommand{\inc}{\subseteq}
\newcommand{\length}{\lambda}
\newcommand{\dt}{\bullet}
\renewcommand{\hat}{\widehat}
\renewcommand{\bar}{\overline}
\renewcommand{\phi}{\varphi}
\theoremstyle{plain}
\newtheorem{thm}{Theorem}
\newtheorem{cor}[thm]{Corollary}
\newtheorem{prop}[thm]{Proposition}
\newtheorem{lemma}[thm]{Lemma}
\theoremstyle{definition}
\newtheorem{defn}[thm]{Definition}
\theoremstyle{remark}
\newtheorem{rmk}[thm]{Remark}
\begin{document}
\title[Asymtotic vanishing conditions]
{Asymptotic vanishing conditions which force regularity in local rings
of prime characteristic}
\author{Ian M. Aberbach}
\address{Mathematics Department \\
    University of Missouri\\
    Columbia, MO 65211 USA}
\email{aberbach@math.missouri.edu}
\urladdr{http://www.math.missouri.edu/$\sim$aberbach}

\author{Jinjia Li}
\address{Department of Mathematics \\
        Syracuse University  \\
         Syracuse, NY 13244-1150  USA}
\email{jli32@syr.edu} \urladdr{http://web.syr.edu/$\sim$jli32}

\date{\today}
\keywords{regular local rings, tight closure, test element, stably
phantom homology, absolute integral closure} \subjclass{Primary:
13A35; Secondary:13D07, 13D22, 13H05}
\bibliographystyle{amsplain}

\numberwithin{thm}{section}
\numberwithin{equation}{section}

\begin{abstract} Let $(R,\m,k)$ be a local (Noetherian) ring of
positive prime characteristic $p$ and dimension $d$.  Let $G_\dt$
be a minimal resolution of the residue field $k$, and for each
$i\ge 0$, let $\gothic t_i(R) = \lim_{e\to \8}
{\length(H_i(F^e(G_\dt)))}/{p^{ed}}$.  We show that if $\gothic
t_i(R) = 0$ for some $i>0$, then $R$ is a regular local ring.
Using the same method, we are also able to show that if $R$ is an
excellent local domain and $\Tor_i^R(k,R^+) = 0$ for some $i>0$,
then $R$ is regular (where $R^+$ is the absolute integral closure
of $R$). Both of the two results were previously known only for $i
= 1$ or $2$ via completely different methods.
\end{abstract}
\maketitle
\section{Introduction}\label{intro}
Throughout this paper, we assume $(R, \m, k)$ is a commutative
local Noetherian ring of characteristic $p>0$ and dimension $d$.
The Frobenius endomorphism $f_R:R \to R$ is defined by
$f_R(r)=r^p$ for $r \in R$. Each iteration $f_R^e$ defines a new
$R$-module structure on $R$, denoted ${}^{f^e}\!\! R$, for which
$a\cdot b=a^{p^e}b$. For any $R$-module $M$, $F_R^e(M)$ stands for
$M\otimes_R {}^{f^e}\!\! R$, the $R$-module structure of which is
given by base change along the Frobenius endomorphism. When $M$ is
a cyclic module $R/I$, it is easy to show that $F^e(R/I) \cong
R/I^{[p^e]}$, where $I^{[p^e]}$ denotes the ideal generated by the
$p^e$-th power of the generators of $I$.

In the sequel, $\length(-)$ denotes the length function. $q$
usually denotes a varying power $p^e$.

In \cite{Li}, the second author introduced the following higher
$\Tor$ counterparts for the Hilbert-Kunz multiplicity
\begin{equation} \label{def}
\gothic t_i(R)= \underset{q \to \8}{\lim}
\length(\Tor_i(k,{}^{f^e}\!\! R))/q^d \end{equation} and showed
that $R$ is regular if and only if $\gothic t_i(R)=0$ for $i=1$ or
$i=2$. In another paper \cite{Ab2}, (which extends results
obtained in \cite{Sch}), the first author proved that an excellent
local domain $R$ is regular if and only if $\Tor_1^R(k,R^+)=0$,
where $R^+$ is the absolute integral closure of $R$ (i.e, the
integral closure of $R$ in the algebraic closure of its field of
fraction). It is not difficult to see that this is also equivalent
to $\Tor_2^R(k,R^+)=0$. In fact, since $R^+$ is a big
Cohen-Macaulay algebra \cite{HL}, by Proposition~2.5 of
\cite{Sch}, the condition $\Tor_2^R(k,R^+)=0$ forces $R$ to be
Cohen-Macaulay. Therefore $\Tor_1^R(R/(\bold x), R^+)=0$ for any
system of parameters $\bold x$ for $R$. $R/(\bold x)$ has a
filtration by $k$. Tensoring the short exact sequences in the
filtration by $R^+$, from the resulting long exact sequences and
the condition $\Tor_2^R(k, R^+)=0$, one gets $\Tor_1^R(k, R^+)=0$.
The methods used in \cite{Li} and \cite{Ab2} are completely
different. However, neither of them works for the case $i \geq 3$
(unless $R$ is assumed to be Cohen-Macaulay).

The main result of this paper, Theorem \ref{mainprop} below,
states that over an equidimensional complete local ring $(R,\m,
k)$ of prime characteristic, if a finitely generated free
resolution of $k$ has \emph{stably phantom homology} at the $i$-th
spot for some $i>0$, then $R$ is regular. As we observe in
Proposition \ref{compareReq}, under mild conditions, a complex of
finitely generated free modules with finite length homology has
stably phantom homology at some spot if and only if a similar
asymptotic length as defined in (\ref{def}) vanishes at the same
spot. Consequently, Theorem \ref{mainprop} allows us to
simultaneously extend the main results in \cite{Li} and \cite{Ab2}
to all $i>0$. Specifically, we have

\vspace{2.5mm}

\noindent {\bf{Corollary}~\ref{mainthm}} Let $R$ be a local ring
of characteristic $p$.  If $\gothic t_i(R) = 0$ for some $i>0$
then $R$ is regular.

\vspace{2.5mm}

\noindent {\bf{Corollary}~\ref{rplus}} Let $(R,\m, k)$ be an
excellent local domain. If $\Tor_i^R(k, R^+) = 0$ for some $i>0$
then $R$ is regular.

\vspace{2.5mm}

As a further consequence of Corollary \ref{mainthm}, we give an
extended version (in the prime characteristic case) of a criterion
of regular local rings due to Bridgeland and Iyengar \cite{BI}.

\section{Asymptotic vanishing and stably phantom homology}\label{background}
We recall some basic definitions in tight closure theory. The
reader should refer to \cite{HH} for details. Let $R^o$ denote the
complement of the union of the minimal primes of $R$. Let $N
\subseteq M$ be two $R$-modules. We write $N\brq_M$ for
$\ker(F^e(M) \to F^e(M/N))$. We say that $x \in M$ is in the
\emph{tight closure} of $N$, $N^*$, if there exist $c \in R^o$ and
an integer $q_0$ such that for all $q \geq q_0$ , $cx^q \in
N\brq_M$.

We say that $c\in R^o$ is a $q_0$-\emph{weak test element} (or
simply a weak test element) if for every finitely generated module
$M$ and submodule $N$, $x\in M$ is in $N^*$ if and only if $cx^q
\in N\brq_M$ for all $q \geq q_0$. If this holds with $q_0 = 1$,we
call $c$ a \emph{test element}.

Let $(G_\dt,\partial_\dt)$ be a complex over $R$. We say that
$G_\dt$ has \emph{phantom homology} at the $i$th spot (or simply,
$G_\dt$ is phantom at the $i$th spot), if $\ker
\partial_i \subseteq (\im \partial_{i+1})^*_{G_i}$. We say that $G_\dt$ has
\emph{stably phantom homology} at the $i$th spot (or simply,
$G_\dt$ is stably phantom at the $i$th spot) if $F^e(G_\dt)$ has
phantom homology at the $i$th spot for all $e \geq 0$.

We say a complex $G_\dt$ is a \emph{left complex} if $G_i=0$ for
$i<0$. A left complex $G_\dt$ of projective modules is call a
\emph{phantom resolution} of $M$ if $H_0(G_\dt)\cong M$ and
$G_\dt$ has phantom homology for all $i>0$. If such a complex
$G_\dt$ is finite, we say $G_\dt$ is a \emph{finite phantom
resolution} of $M$.

By a \emph{resolution} of a module $M$, we always mean a
resolution of $M$ by finitely generated free modules. It is easy
to check that for a given module $M$, whether a resolution of $M$
has stably phantom homology at the $i$th spot is independent of
the choice of the resolution. We will use this fact many times
without explicitly mentioning it.

The following two lemmas are easy consequences of the above
definitions, we leave them for the reader to verify.
\begin{lemma} Let $G_\dt$ be a complex of finitely generated $R$-modules.
\begin{enumerate}
\item If for some $d\in R^o$ ($d$ need not  be a test element),
$dH_i(F^e_{R}(G_\dt))=0$ for all $e \geq 0$, then $G_\dt$ has
stably phantom homology at the $i$th spot. \item Suppose $R$
admits a test element $c$. If $G_\dt$ has phantom homology at the
$i$th spot for some $i$, then $cH_i(G_\dt)=0$. In particular, if
$G_\dt$ has stably phantom homology at the $i$th spot for some
$i$, then $cH_i(F^e(G_\dt))=0$ for all $e \geq 0$.
\end{enumerate}
\end{lemma}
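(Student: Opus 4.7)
The plan is to verify both parts directly from the definitions, since each reduces to unwinding what (stably) phantom homology means together with the right-exactness of the Frobenius functor. No real obstacle is expected; this is presumably why the authors leave the lemma as an exercise.

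For part~(1), I would fix $e_0\ge 0$ and aim to show that $F^{e_0}(G_\dt)$ is phantom at the $i$-th spot, i.e.\ every $x\in \ker F^{e_0}(\partial_i)$ lies in $(\im F^{e_0}(\partial_{i+1}))^*_{F^{e_0}(G_i)}$. Two routine identifications do the work: first, applying $F^e$ to $F^{e_0}(G_\dt)$ simply yields $F^{e_0+e}(G_\dt)$; second, for any map $\phi\colon A\to B$ of finitely generated free modules, $(\im\phi)\brq_B = \im F^e(\phi)$, which follows from right-exactness of Frobenius base change applied to the surjection $A\onto \im\phi$. Combining these, the natural image $x^q \in F^{e_0+e}(G_i)$ is a cycle of $F^{e_0+e}(G_\dt)$, and the hypothesis $d H_i(F^{e_0+e}(G_\dt))=0$ places $d\cdot x^q$ in $\im F^{e_0+e}(\partial_{i+1}) = (\im F^{e_0}(\partial_{i+1}))\brq_{F^{e_0}(G_i)}$ for every $q$. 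Taking $c=d\in R^o$ then exhibits $x$ in the desired tight closure, as required.

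For part~(2), I would start from the phantom condition $\ker\partial_i\subseteq (\im\partial_{i+1})^*_{G_i}$. For any cycle $x$, the defining property of a test element (valid at $q_0=1$) gives $c x^q\in (\im\partial_{i+1})\brq_{G_i}$ for every $q\ge 1$; specializing to $q=1$ yields $cx\in \im\partial_{i+1}$, so $c$ annihilates the class of $x$ in $H_i(G_\dt)$. Since $x$ is arbitrary, $c H_i(G_\dt)=0$. The ``in particular'' clause is then immediate by applying this to each $F^e(G_\dt)$ separately, using stable phantomness to know each such complex is phantom at the $i$-th spot. The one subtlety worth flagging is the identification $(\im\phi)\brq_B = \im F^e(\phi)$ used in part~(1), which quietly underlies the translation between ``Frobenius homology killed by $d$'' and ``cycles in the tight closure of boundaries''; once this is noted, both statements are formal.
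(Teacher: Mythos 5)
Your proof is correct and takes exactly the route the authors intend when they say the lemma is an ``easy consequence'' and leave it to the reader: unwind the definitions, using the compositum identity $F^e\circ F^{e_0}=F^{e_0+e}$ and the right-exactness identity $(\im\phi)\brq_B=\im F^e(\phi)$ to translate between Frobenius homology and bracket powers. One small remark: you restrict the latter identity to maps of finitely generated \emph{free} modules, but the lemma's hypothesis only assumes the $G_i$ finitely generated; fortunately your own argument (right-exactness of $F^e$ applied to $A\onto\im\phi\hookrightarrow B$) never uses freeness, so the identity holds for arbitrary finitely generated modules and the proof goes through as written.
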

\begin{lemma} \label{cone}Suppose $R$ admits a test element. Let $\alpha: F_\dt \to G_\dt$ be a chain map between
two complexes $F_\dt$ and $G_\dt$ over $R$. If both $F_\dt$ and
the mapping cone of $\alpha$ have stably phantom homology at the
$i$th spot, then so does $G_\dt$.
\end{lemma}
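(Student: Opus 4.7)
The plan is to exploit the long exact sequence in homology associated with the mapping cone, then combine with both parts of the previous (unlabelled) lemma. First I would note that the mapping cone construction is functorial and additive on the underlying free modules, so $F^e(C(\alpha)) = C(F^e(\alpha))$ for every $e \ge 0$, and the standard short exact sequence of complexes
\[
0 \longto F^e(G_\dt) \longto F^e(C(\alpha)) \longto F^e(F_\dt)[-1] \longto 0
\]
is preserved by the Frobenius functor. Taking homology yields, for each fixed $i$ and each $e \ge 0$, a five-term piece
\[
H_i(F^e(F_\dt)) \longto H_i(F^e(G_\dt)) \longto H_i(F^e(C(\alpha))) \longto H_{i-1}(F^e(F_\dt))
\]
in which the outer maps are, up to sign, induced by $F^e(\alpha)$.

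Next I would let $c \in R^o$ be a test element. By part~(2) of the previous lemma, the stably phantom hypothesis on $F_\dt$ gives $c\,H_i(F^e(F_\dt)) = 0$ for all $e \ge 0$, and the stably phantom hypothesis on $C(\alpha)$ gives $c\,H_i(F^e(C(\alpha))) = 0$ for all $e \ge 0$. From the displayed four-term sequence, $H_i(F^e(G_\dt))$ sits between the image of $H_i(F^e(F_\dt))$ (killed by $c$) and a submodule of $H_i(F^e(C(\alpha)))$ (also killed by $c$). Consequently $c^2\,H_i(F^e(G_\dt)) = 0$ for all $e \ge 0$.

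Finally, since $R^o$ is multiplicatively closed, $c^2 \in R^o$, so part~(1) of the previous lemma (applied with $d = c^2$) yields that $G_\dt$ has stably phantom homology at the $i$th spot, as required. There is no real obstacle here beyond checking that the Frobenius functor commutes with the mapping cone and that the bound $c^2$, obtained uniformly in $e$, is in $R^o$; both are immediate, so the argument is essentially a formal chase.
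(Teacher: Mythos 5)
The paper leaves this lemma as an exercise, so there is no author proof to compare against, but your argument is correct and is clearly the intended one. You reduce stable phantomness of $G_\dt$ to the annihilation criterion of part~(1) of the preceding lemma: applying $F^e$ to the degreewise-split short exact sequence $0 \to G_\dt \to \cone(\alpha) \to F_\dt[-1] \to 0$ (split in each degree, so exactness survives the tensor), taking the long exact sequence in homology, and using part~(2) to kill $H_i(F^e(F_\dt))$ and $H_i(F^e(\cone(\alpha)))$ by the test element $c$ gives $c^2\,H_i(F^e(G_\dt))=0$ uniformly in $e$; since $c^2 \in R^o$, part~(1) finishes. One small inaccuracy worth flagging: only the connecting map $H_i(F^e(F_\dt)) \to H_i(F^e(G_\dt))$ is (up to sign) induced by $F^e(\alpha)$; the other boundary map in your four-term strip is the projection to $F_\dt[-1]$. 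This does not affect the argument, which uses only exactness, not the identification of the maps.
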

We will make ample use of one of the main results of Seibert's
paper \cite{Sei}:
\begin{prop}[\cite{Sei}, Propostion 1]\label{prop:seibert}
Let $R$ be a local ring of characteristic $p$.  Let $G_\dt$ be a
left complex of finitely generated free $R$-modules, and $N$ a finitely
generated $R$ module such that $G_\dt \otimes_R N$ has homology of
finite length.  Let $t = \dim N$.  Then for each $i\ge 0$ there
exists $c_i\in \mathbb{R}$ such that
\begin{equation*}
\length\left(\left(H_i(F^e_R(G_\dt)\otimes_R N\right)\right)
 = c_i q^t + O(q^{t-1}).
\end{equation*}
\end{prop}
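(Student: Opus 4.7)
The plan is to argue by Noetherian induction on $t = \dim N$. We may assume $G_\dt$ is \emph{minimal} (every matrix entry of each $\partial_i$ lies in $\m$): trivial summands of the form $R \xrightarrow{\sim} R$ split off without affecting the homology of any tensor product with $G_\dt$. For the base case $t = 0$, pick $s$ with $\m^s N = 0$; the entries of $F^e(\partial_i)$ are $q$th powers of the entries of $\partial_i$, hence lie in $\m^q$ and annihilate $N$ once $q \ge s$. The differentials of $F^e(G_\dt) \otimes_R N$ are therefore eventually zero, giving $H_i(F^e(G_\dt) \otimes_R N) \isom N^{\mathrm{rank}(G_i)}$ for $q \gg 0$, with constant length equal to $c_i := \mathrm{rank}(G_i) \cdot \length(N)$ (and vanishing error term).

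For the inductive step, fix a prime filtration $0 = N_0 \inc N_1 \inc \cdots \inc N_n = N$ with $N_j/N_{j-1} \isom R/P_j$. The long exact sequences in $H_\bullet(F^e(G_\dt) \otimes_R -)$ derived from each $0 \to N_{j-1} \to N_j \to R/P_j \to 0$ reduce the asymptotic formula for $N$ to the formula for each $R/P_j$; the quotients with $\dim R/P_j < t$ contribute only $O(q^{t-1})$ by the inductive hypothesis, and careful control of the boundary contributions in these sequences is the technical bookkeeping step of the d\'evissage. For a top-dimensional quotient $R/P$ (where $\dim R/P = t$), the base-change identification of complexes
\begin{equation*}
F^e_R(G_\dt) \otimes_R R/P \isom F^e_{R/P}(G_\dt \otimes_R R/P),
\end{equation*}
valid because $P^{[q]} \inc P$, further reduces the problem to the case where $R$ is a $t$-dimensional local domain and $N = R$ itself.

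The main obstacle is therefore to establish convergence of $\length(H_i(F^e(G_\dt)))/q^t$ when $R$ is a $t$-dimensional local domain and $G_\dt$ is a free complex with finite-length homology. I would attack this by a Monsky-style Cauchy argument. Using $F^{e+1} = F \circ F^e$ together with the flatness of Frobenius at the generic point of the domain $R$, the discrepancy between $\length(H_i(F^{e+1}(G_\dt)))$ and $p^t \length(H_i(F^e(G_\dt)))$ can be bounded by lengths of Frobenius iterates of $R$-modules of dimension $< t$; these are $O(q^{t-1})$ by the outer inductive hypothesis (applied to trivial complexes over $R$). Consequently $\length(H_i(F^e(G_\dt)))/q^t$ is Cauchy, giving both the limit $c_i$ and the error estimate $O(q^{t-1})$.
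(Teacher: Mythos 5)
This proposition is quoted from Seibert's paper \cite{Sei}; the present paper cites it and gives no proof of its own, so there is no internal argument to compare against. Taking your sketch on its own terms: the overall architecture (reduce to a minimal complex, settle $t=0$ by observing that the twisted differentials eventually kill $N$, d\'evissage by dimension via a prime filtration, then a Monsky-style Cauchy argument for a domain) is indeed the right general shape and is consistent with Seibert's approach. The base case is correct as written.

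There are, however, two substantive gaps. First, in the Cauchy step you assert that the discrepancy between $\length H_i(F^{e+1}(G_\dt))$ and $p^t\length H_i(F^e(G_\dt))$ is bounded by ``lengths of Frobenius iterates of $R$-modules of dimension $<t$,'' controlled by the induction hypothesis ``applied to trivial complexes over $R$.'' That misidentifies the error terms. If one realizes $R^{\rho}\hookrightarrow {}^{f}R$ with torsion cokernel $Q$, $\dim Q<t$, then tensoring the free complex $F^e(G_\dt)$ with the short exact sequence $0\to R^{\rho}\to {}^{f}R\to Q\to 0$ and taking the long exact sequence shows that the discrepancy is controlled by $\length H_j\bigl(F^e(G_\dt)\otimes_R Q\bigr)$ for $j=i,i+1$ --- that is, by homology of the same complex $G_\dt$ against a lower-dimensional \emph{module}, not by $\length F^e(Q)$. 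So the inductive hypothesis must be invoked for the pair $(G_\dt,Q)$ with $\dim Q<t$, not for a trivial complex; applying it to trivial complexes yields nothing useful. Relatedly, to get the scaling factor to be $p^t$ one needs $\operatorname{rank}_R({}^{f}R)=p^t$, which requires the standard preliminary reductions (pass to the completion, replace the residue field by a perfect one so that $R$ becomes $F$-finite); you omit these.

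Second, the d\'evissage as stated does not actually reduce the asymptotic to the prime cyclic case. The long exact sequence coming from $0\to N_{j-1}\to N_j\to R/P_j\to 0$ controls the alternating sum of lengths exactly, but the individual $\length H_i$ is only pinned down modulo the images of connecting maps, and those can be of order $q^t$, not $q^{t-1}$. You flag this as ``careful control of the boundary contributions,'' but with the reductions carried out in the order you propose this is not a matter of bookkeeping; the filtration step needs to be interleaved with the Cauchy estimate (or one needs a preliminary $O(q^t)$ bound valid for all modules of dimension $\le t$, established separately). As written, the proposal would not compile into a complete proof, and a referee would want the Cauchy error bound stated correctly and the filtration-versus-limit interaction spelled out.
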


We need a version of \cite{HH}, Theorem 8.17 for equidimensional rings that
are not reduced.
\begin{lemma}\label{equidimllength}
Let $(R,\m)$ be a complete equidimensional local ring of dimension $d$.
Suppose that $N \inc W \inc M$ are finitely generated modules such that
$W/N$ has finite length.  Then $W \inc N^*_M$ if and only if
$\lim_{q\to\8}\length(W\brq_M/N\brq_M)/q^d  =0$.
\end{lemma}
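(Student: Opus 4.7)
The plan is to adapt the proof of \cite{HH}, Theorem 8.17 (which treats the reduced equidimensional case) to the equidimensional but possibly non-reduced setting. Via the natural identification $W^{[q]}_M/N^{[q]}_M \cong (W/N)^{[q]}_{M/N}$, I may reduce to $N = 0$ and assume $W$ is a finite-length submodule of $M$.

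For the forward direction, fix a test element $c \in R^\circ$ (which exists since $R$ is a complete local equidimensional ring). The hypothesis $W \subseteq 0^*_M$ gives $cW^{[q]}_M = 0$ in $F^e(M)$ for $q \gg 0$, so the surjection $F^e(W) \twoheadrightarrow W^{[q]}_M$ factors through $F^e(W)/cF^e(W) = F^e(W)\otimes_R R/(c)$, whence $\length(W^{[q]}_M) \leq \length(F^e(W)\otimes_R R/(c))$. Applying Proposition \ref{prop:seibert} to a free resolution $P_\dt$ of $W$ with coefficient module $R/(c)$, the right-hand side grows like $q^{\dim R/(c)}$. Equidimensionality of $R$ together with $c \in R^\circ$ forces $\dim R/(c) = d-1$ (complete local equidimensional rings are catenary, and $c$ avoids every minimal prime), so $\length(W^{[q]}_M) = O(q^{d-1}) = o(q^d)$.

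For the backward direction, let $J$ denote the nilradical of $R$ and set $R' = R/J$, $\bar M = M/JM$. Then $R'$ is a complete local reduced equidimensional ring of dimension $d$. A calculation with the tensor structure (using $J^{[q]} = 0$ once $q \geq n$, where $J^n = 0$) yields a natural surjection $F^e_R(M) \twoheadrightarrow F^e_{R'}(\bar M)$ with kernel $JF^e_R(M)$, under which $W^{[q]}_{M,R}$ maps onto $\bar W^{[q]}_{\bar M, R'}$. Thus the vanishing hypothesis descends to $\length_{R'}(\bar W^{[q]}_{\bar M, R'})/q^d \to 0$, and applying \cite{HH}, Theorem 8.17 to $R'$ produces, for each $w \in W$, some $\bar c \in (R')^\circ$ with $\bar c \bar w^q = 0$ in $F^e_{R'}(\bar M)$ for $q$ large.

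The main obstacle is lifting this witness back to $R$: a preimage $c \in R^\circ$ of $\bar c$ yields only $cw^q \in JF^e_R(M)$. To absorb the nilradical, I would write $cw^q = \sum j_i m_i$ with $j_i \in J$ and apply one additional Frobenius step: using $F^{e'}(jm) = j^{q'} F^{e'}(m)$ in $F^{e+e'}(M)$, every summand vanishes once $q' \geq n$. Taking $q^*$ to be the smallest power of $p$ with $q^* \geq n$ gives $c^{q^*} w^{qq^*} = 0$ in $F^{e+e^*}(M)$ for all $q \geq q_0$; since $R^\circ$ is closed under powers, this yields $w \in 0^*_M$ with witness $c^{q^*}$.
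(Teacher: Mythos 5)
Your proof is correct, but it follows a genuinely different (and more self-contained) route than the paper's, so it is worth contrasting the two.

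For the forward implication, the paper simply cites \cite{HH}, Theorem~8.17(a). You instead re-derive it: bound $\length(W\brq_M)$ by $\length(F^e(W)\otimes_R R/(c))$ for a common witness $c$, invoke Proposition~\ref{prop:seibert} with coefficient module $R/(c)$, and use equidimensionality to get the exponent $d-1$. This is a valid alternative, but one caveat: you assert that a genuine test element exists because $R$ is complete, local and equidimensional. That is not clearly available when $R$ is non-reduced; the Hochster--Huneke existence theorems are stated for reduced excellent rings, and for general complete local rings one typically only has \emph{weak} test elements. Fortunately you don't need a test element at all: since $W$ is finitely generated, take $c$ to be a product of individual witnesses for a finite generating set of $W$.

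For the backward implication, both you and the paper pass to $R' = R/J$ with $J=\sqrt 0$, and both use the reduced-equidimensional case of \cite{HH}, Theorem~8.17. The paper reduces to $M$ free, notes that $\length\bigl((W\brq_M + JM)/(N\brq_M + JM)\bigr)\le \length(W\brq_M/N\brq_M)$, applies 8.17(b) over $\bar R$, and then cites \cite{HH}, Proposition~8.5(j) (tight closure is unaffected by killing nilpotents) to pull the conclusion back to $R$. You instead reduce to $N=0$, exhibit the surjection $F^e_R(M)\twoheadrightarrow F^e_{R'}(\bar M)$ with kernel $JF^e_R(M)$ (a small slip: this kernel description holds for every $q$; the observation $J\brq=0$ for $q\ge n$ is irrelevant here), deduce $\bar W\inc 0^*_{\bar M}$ over $R'$, and then lift by hand. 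Your lifting step, via the additive Frobenius-semilinear map $F^e_R(M)\to F^{e+e'}_R(M)$, $z\mapsto z\otimes 1$, which satisfies $\iota(jz)=j^{q'}\iota(z)$ and $\iota(w^q)=w^{qq'}$, indeed absorbs the nilpotent error once $q'\ge n$ and produces $c^{q'}w^{qq'}=0$, hence $w\in 0^*_M$. This in effect re-proves the special case of \cite{HH}, Proposition~8.5(j) that is needed, making your argument more self-contained at the cost of a bit more work. Overall, the proof is sound; the only genuine imprecision is the unjustified appeal to a test element in the non-reduced setting, which, as noted, is easily repaired.
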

\begin{proof}
The implication $\Rightarrow$ follows from \cite{HH}, Theorem 8.17(a).

Let $J = \sqrt 0$ be the nilradical, and let $\bar R = R/J$.  By mapping
a free module onto $M$ and taking preimages, there is no loss of
generality in assuming that $M$ is free.  Clearly, for each $q$,
$\length\left(\dfrac{W\brq_M + JM}{N\brq_M +JM}\right) \le
\length\left(\dfrac{W\brq_M }{N\brq_M}\right)$, so by \cite{HH}, Theorem8.17(b),
$W+JM$ is in the tight closure of $N+JM$ computed over $\bar R$.
This implies that $W \inc N^*_M$ when computed over $R$
(see \cite{HH}, Proposition 8.5(j)).  This shows $\Leftarrow$.
\end{proof}

\begin{defn}  Let $R$ be a local ring of dimension $d$.
Let $I$ be the intersection of the primary components of $(0)$ which
have dimension $d$.  Then $R^{eq} := R/I$.
\end{defn}

\begin{prop}\label{compareReq}
Let $R$ be a local ring of dimension $d$, and $G_\dt$ a left
complex of finitely generated free modules with finite length
homology.  Then for $i\ge 0$
\begin{equation*}
\lim_{q\to\8} \dfrac{\length\left(H_i(F^e_R(G_\dt))\right)}{q^d} =
\lim_{q\to\8} \dfrac{\length\left(H_i(F^e_{R^{eq}}(G_\dt\otimes_R
R^{eq}))\right)}{q^d}.
\end{equation*}
In the case that either is $0$ (in which case both are),
 $G_\dt\otimes_R R^{eq}$ has stably phantom homology
at the $i$th spot (over $R^{eq}$). The converse is also true when
$R^{eq}$ admits a test element $c$ that is regular on $R^{eq}$
(e.g, $R$ is excellent, reduced and unmixed).
\end{prop}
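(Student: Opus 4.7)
The proof splits into three parts: equality of the two limits, the forward direction (limit zero $\Rightarrow$ stably phantom), and the converse. The main tools will be Seibert's Proposition~\ref{prop:seibert}, Lemma~\ref{equidimllength}, and Lemma~2.2.

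For the equality, write $R^{eq}=R/I$, and let $J$ denote the intersection of the remaining (lower-dimensional) primary components of $(0)$. Then $I\cap J=0$, so $JI=0$, hence $J\subseteq\Ann_R I$ and $\dim_R I\le d-1$. Since each $F^e(G_j)$ is free over $R$, tensoring the short exact sequence $0\to I\to R\to R^{eq}\to 0$ with $F^e(G_\dt)$ yields a short exact sequence of complexes. The long exact sequence in homology, combined with the identification $F^e(G_\dt)\otimes_R R^{eq}\cong F^e_{R^{eq}}(G_\dt\otimes_R R^{eq})$, reduces the comparison to bounding $\length H_i(F^e(G_\dt)\otimes_R I)$. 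Seibert's proposition applied with $N=I$ (so $t=\dim I\le d-1$) yields the required $O(q^{d-1})$ estimate, whence the two limits agree.

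For the forward direction, assume the common limit is zero. After reducing to the complete case, fix $e'\ge 0$ and consider the complex $E_\dt=F^{e'}_{R^{eq}}(G_\dt\otimes R^{eq})$ over $R^{eq}$. Set $M=E_i$, $W=\ker(E_i\to E_{i-1})$, and $N=\im(E_{i+1}\to E_i)$. A direct check gives $N\brq_M=\im F^e(E_{i+1}\to E_i)$ and $W\brq_M\subseteq\ker F^e(E_i\to E_{i-1})$, so $W\brq_M/N\brq_M$ embeds in $H_i(F^e(E_\dt))=H_i(F^{e+e'}_{R^{eq}}(G_\dt\otimes R^{eq}))$. Since
\begin{equation*}
\frac{\length H_i(F^{e+e'}_{R^{eq}}(G_\dt\otimes R^{eq}))}{q^d}=p^{e'd}\cdot\frac{\length H_i(F^{e+e'}_{R^{eq}}(G_\dt\otimes R^{eq}))}{p^{(e+e')d}}\longrightarrow 0,
\end{equation*}
Lemma~\ref{equidimllength} gives $W\subseteq N^*_M$, which is precisely the statement that $E_\dt$ is phantom at the $i$th spot.

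For the converse, let $c$ be a test element on $R^{eq}$ that is a nonzerodivisor, and set $G_\dt'=G_\dt\otimes R^{eq}$. By Lemma~2.2(2), $c\cdot H_i(F^e_{R^{eq}}(G_\dt'))=0$ for all $e\ge 0$. Applying $F^e_{R^{eq}}$ to $0\to G_\dt'\xrightarrow{c}G_\dt'\to G_\dt'/cG_\dt'\to 0$ (exact at each level because $c$ is a nonzerodivisor on the free terms) produces $0\to F^e(G_\dt')\xrightarrow{c^q}F^e(G_\dt')\to F^e(G_\dt'/cG_\dt')\to 0$. The long exact sequence in homology, combined with $c^qH_i(F^e(G_\dt'))=0$, gives an injection $H_i(F^e(G_\dt'))\hookrightarrow H_i(F^e(G_\dt'/cG_\dt'))$. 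Seibert's proposition over $R^{eq}$ applied with $N=R^{eq}/cR^{eq}$ (so $\dim N\le d-1$) bounds $\length H_i(F^e(G_\dt'/cG_\dt'))=O(q^{d-1})$, hence the asymptotic length over $R^{eq}$ vanishes; Part~1 transfers this back to $R$. The main obstacle I anticipate is in the forward direction: Lemma~\ref{equidimllength} requires a complete equidimensional ring, but the completion of $R^{eq}$ may fail to be equidimensional, so some care is needed---either by completing and then re-equidimensionalizing, or by faithfully flat descent of tight closure from $\widehat{R^{eq}}$ back to $R^{eq}$.
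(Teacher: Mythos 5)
Your argument is correct and follows essentially the same strategy as the paper: decompose via a short exact sequence involving $R^{eq}$, apply Seibert's Proposition~\ref{prop:seibert} to kill the lower-dimensional contributions, use Lemma~\ref{equidimllength} (applied to $F^{e'}$ of the base-changed complex for each $e'\ge 0$) for the forward implication, and use the test-element annihilation plus a second application of Seibert modulo $c$ for the converse. The one genuine deviation is in the first part: you tensor the short exact sequence $0\to I\to R\to R^{eq}\to 0$ and observe $\dim_R I\le d-1$ because $J\subseteq\Ann_R I$, whereas the paper instead tensors the Mayer--Vietoris-type sequence $0\to R\to R^{eq}\oplus R/J\to R/(I+J)\to 0$, which sidesteps computing $\dim I$ by working only with quotients of $R$. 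Your version is slightly more economical and equally valid; in either case one should note (though neither you nor the paper does explicitly) that $G_\dt\otimes_R N$ automatically has finite-length homology for any finitely generated $N$, since for $\p\neq\m$ the exact left complex $(G_\dt)_\p$ of finitely generated free modules is split exact.

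Your closing worry about the forward direction is legitimate and, interestingly, is present in the paper's own proof as well: Lemma~\ref{equidimllength} is stated for \emph{complete} equidimensional rings, and the paper simply reduces to $R=R^{eq}$ without completing. The gap is harmless in context because the proposition's only use (Corollary~\ref{mainthm}) is after first completing $R$, so that $R^{eq}$ is automatically complete and equidimensional. If one insists on proving the proposition at the stated generality, the cleanest fix is to reduce to $R=R^{eq}$ first (so $R$ is equidimensional), then note that while $\hat R$ need not be equidimensional, one can pass to $(\hat R)^{eq}$ and use that tight closure in $R$ is detected by tight closure in $(\hat R)^{eq}$ for finite-length quotients; but for the purposes of this paper the complete-case reading is what is intended.
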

\begin{proof}  Let $I$ be as in the definition of $R^{eq}$.
Let $J$ be the intersection
of primary components of  $(0)$ of dimension strictly less than $d$,
so $(0) = I \cap J$ and $\dim(R/J) < d$.  There is then an exact sequence
\begin{equation}\label{eqn:decompose}
0 \to R \to R^{eq} \oplus R/J \to R/(I+J) \to 0
\end{equation}
 (and we note that $\dim(R/(I+J))
< d$).

By Proposition~\ref{prop:seibert}, for all $j$,
\begin{equation}\label{eqn:smallhomology}
\lim_{q\to\8}
\dfrac{\length\left(H_j\left(F^e(G_\dt)\otimes_R R/J\right)\right)}{q^d}
=\lim_{q\to\8}
\dfrac{\length\left(H_j\left(F^e(G_\dt)\otimes_R R/(I+J)\right)\right)}{q^d}
 = 0.
\end{equation}
If we tensor the short exact sequence~\ref{eqn:decompose} with
$F^e(G_\dt)$, we get a short exact sequence of complexes. The
resulting long exact sequence in homology and the observations in
equation(~\ref{eqn:smallhomology}) give the desired equation.

Suppose now that the given limit is $0$. Without loss of
generality, we assume $R=R^{eq}$. Let $\partial_i$ denote the map
from $G_i$ to $G_{i-1}$ in the complex $G_\dt$. It is easy to
check that $(\ker{\partial_i})^{[q]}_{G_i} \subseteq
\ker(\partial_i^{[q]})$ and $(\im{\partial_{i+1}})^{[q]}_{G_i} =
\im(\partial_{i+1}^{[q]})$. So Lemma~\ref{equidimllength} shows
that $G_\dt$ has stably phantom homology at the $i$th spot over
$R^{eq}$.

Conversely, assume $R$ is equidimensional with a test element $c$.
Suppose $G_\dt$ has stably phantom homology at the $i$th spot.
Then $cH_i(F^e_{R}(G_\dt))=0$ for all $e \geq 0$. Since $c$ is
regular on $R$, one has an embedding $H_i(F^e_R(G_\dt))\otimes
R/cR \hookrightarrow H_i(F^e_{R/cR}(G_\dt\otimes (R/cR)))$, i.e.,
$H_i(F^e_{R}(G_\dt)) \hookrightarrow H_i(F^e_{R/cR}(G_\dt\otimes
(R/cR)))$ for all $e \geq 0$. Thus
\begin{equation}
\lim_{q\to\8} \dfrac{\length\left(H_i(F^e_R(G_\dt))\right)}{q^d}
\leq \lim_{q\to\8} \dfrac{\length\left(H_i(F^e_{R/cR}(G_\dt\otimes
(R/cR)))\right)}{q^d} =0
\end{equation}
The equality on the right hand side follows from Proposition
\ref{prop:seibert} again.
\end{proof}

\section{The main theorem}\label{maintheorem}
\begin{thm}\label{mainprop}
Let $(R,\m, k)$ be an excellent local ring of characteristic $p$.
Let $G_\dt$ be a resolution of $k$.  If for some $i>0$ the complex
$G_\dt\otimes_R R^{eq}$ is stably phantom at the $i$th spot, then
$R$ is regular.
\end{thm}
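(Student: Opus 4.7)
The strategy is to reduce to the complete equidimensional setting and then argue by induction on $i$, with base cases $i = 1$ and $i = 2$ supplied by \cite{Ab2} and \cite{Li} respectively.

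First, completing $R$ preserves both the hypothesis (by compatibility of $R^{eq}$ with completion for excellent rings) and the conclusion (regularity descends along faithfully flat maps). By Proposition~\ref{compareReq}, the stably phantom hypothesis on $G_\dt \otimes R^{eq}$ is equivalent to the asymptotic length vanishing
\[
\lim_{q \to \infty} \frac{\length(H_i(F^e_R(G_\dt)))}{q^d} = 0,
\]
which is an invariant of $R^{eq}$. Hence one may reduce to the core case where $R$ is complete, equidimensional, and admits a test element $c$ that is a nonzerodivisor on $R$; the separate task of ruling out lower-dimensional primary components is handled through the length comparison in Proposition~\ref{compareReq}.

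The proof then proceeds by induction on $i$. The base cases $i = 1,2$ are the theorems of \cite{Ab2} and \cite{Li}, reinterpreted via Proposition~\ref{compareReq}. For the inductive step, I assume the theorem at spot $i - 1$ and that $G_\dt$ is stably phantom at spot $i \geq 3$; the plan is to propagate the stably phantom condition to spot $i - 1$. The main tool is the mapping cone Lemma~\ref{cone}: one constructs a chain map $\alpha: F_\dt \to G'_\dt$ between complexes derived from $G_\dt$ so that both $F_\dt$ and $\cone(\alpha)$ are stably phantom at spot $i - 1$; Lemma~\ref{cone} then delivers the same for $G'_\dt$. Quantitative input comes from Seibert's Proposition~\ref{prop:seibert}: applied with $N = R/cR$, it gives $\length(H_j(F^e(G_\dt)/c)) = O(q^{d-1})$ for all $j$, which combined with the long exact sequence attached to $0 \to F^e(G_\dt) \xrightarrow{c} F^e(G_\dt) \to F^e(G_\dt)/c \to 0$ should produce the needed estimates on the cone.

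The critical difficulty lies in the choice of chain map. The most natural candidate---multiplication by $c$ on $G_\dt$---produces a mapping cone whose homology mixes $H_{i-1}(F^e(G_\dt))$ with $H_{i-2}(F^e(G_\dt))$ through the long exact sequence, and Seibert's $O(q^{d-1})$ bound only controls $c$-torsion pieces rather than the full asymptotic length needed to close the induction. Overcoming this will likely require a more refined construction, perhaps iterating the mapping cone, incorporating the Koszul complex on a full system of parameters, or exploiting Frobenius powers of $c$, so that the cone's asymptotic homology at spot $i - 1$ is genuinely governed by the hypothesized vanishing at spot $i$. Identifying this construction is where the real technical work of the proof will lie.
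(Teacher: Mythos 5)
Your proposal correctly identifies the preliminary reductions (completion and the translation via Proposition~\ref{compareReq}), but it stops short of a proof: you explicitly acknowledge that you do not have a working construction for the inductive step, and indeed the strategy you outline---descending from stably phantom at spot $i$ to stably phantom at spot $i-1$ by a suitable mapping cone---is not how the argument goes and does not appear to be salvageable. As you observe yourself, multiplication by a test element mixes homological degrees in the wrong way, and there is no evident chain map that transfers the condition downward in $i$.

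The paper's proof moves in a completely different direction. The key step is to propagate the stably phantom condition not downward in homological degree but \emph{outward in the module variable}: starting from the hypothesis for $M = k$, one shows that for \emph{every} finitely generated $R$-module $M$ with resolution $(P_\dt,\beta_\dt)$, the complex $P_\dt \otimes R^{eq}$ is stably phantom at the $i$th spot. This is done first for finite-length $M$ by induction on $\length(M)$ via Lemma~\ref{cone} applied to the filtration of $M$ by $k$'s, and then for arbitrary finitely generated $M$ by induction on $\dim M$, using $0 \to H_\m^0(M) \to M \to M/H_\m^0(M) \to 0$, then a regular element $x$ on $M$ together with the Krull intersection theorem to strip off the $x^{qq_0}P_i$ ambiguity. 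Once the condition holds for all $M$, applying it to syzygies of $M$ shows that $P_\dt \otimes R^{eq}$ is stably phantom at \emph{all} spots $\ge i$ (the direction that \emph{is} easy to shift), and then Theorem~2.1.7 of \cite{Ab1} (finite phantom projective dimension) forces a minimal resolution of $\syz_{i-1}M$ over $R^{eq}$ to terminate, hence $\pd_R M < \infty$ for all $M$, hence $R$ is regular. There is also a subtlety you gloss over in the reduction: one cannot simply replace $R$ by $R^{eq}$, since regularity of $R^{eq}$ does not imply regularity of $R$ (e.g.\ $R=k[[x,y]]/(x^2,xy)$ has $R^{eq}=k[[y]]$ regular); the paper instead carries the decomposition $(0) = I \cap J$ throughout, choosing $c_1 \in J$ killing $I$ so that the auxiliary multiplier $d = cc_1$ makes the phantom computation take place over $R$ itself. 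Both the propagation-to-all-modules idea and the appeal to finite phantom projective dimension are missing from your proposal, and they are precisely what makes the argument close.
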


\begin{proof}
Write $(0) = I \cap J$ as in Proposition~\ref{compareReq}. Let
$c\in R$ be an element which is a $q_0$-weak test element in
$R^{eq}$, and let $c_1 \in J$ but not in any minimal prime of $I$.
Set $d = cc_1$. We claim that if $M$ is any module of finite
length, $(P_\dt, \beta_\dt)$ is a resolution for $M$, and $w \in
\ker(\beta_i\brq)$, then $dw^{q_0} \in \im(\beta_{i+1}^{[qq_0]})$.
We can induce on the length of the module $M$, with $\length(M)=1$
known.  If $\length(M)>1$ take a short exact sequence $0 \to M_1
\to M \to M_2 \to 0$ with $1 \le \length(M_1) < \length(M)$.  Let
$G_\dt$ be a resolution of $M_1$. Then there is a chain map $G_\dt
\to P_\dt$, and the mapping cone $T_\dt$ is a resolution of $M_2$.
By the induction hypothesis and the observation above, $G_\dt
\otimes_R R^{eq}$ and $T_\dt\otimes_R R^{eq}$ are stably phantom
over $R^{eq}$ at the $i$th spot. Then Lemma \ref{cone} shows
$P_\dt \otimes_R R^{eq}$ is  stably phantom over $R^{eq}$ at the
$i$th spot. Thus, if $w \in \ker(\beta_i\brq)$, then $cw^{q_0} \in
\im(\beta_{i+1}^{[qq_0]}) + IP_i$, and $c_1I=0$, so $dw^{q_0} =
c_1cw^{q_0} \in \im(\beta_{i+1}^{[qq_0]})$.

We next wish to show that the claim above is true for any finitely
generated module $M$.  We can induce on $\dim M$, with the case
$\dim M = 0$ done.  Assume that $\dim M>0$.  The same mapping cone
argument as above applied to $0 \to H_\m^0(M) \to M \to
M/H_\m^0(M) \to 0$ shows that we may assume that $\depth_\m M>0$.
Let $x \in \m$ be a nonzerodivisor on $M$ (so $\dim(M/xM)< \dim
M $). The mapping cone argument for the short exact sequence $0
\to M \overset x{\to} M \to M/xM \to 0$ gives an exact sequence
$H_i(F^e(P_\dt)) \overset {x^q}{\to}H_i(F^e(P_\dt)) \to
H_i(F^e(T_\dt))$.  Let $w \in \ker(\beta_i\brq)$.  Its image in
$F^e(T_\dt)$ gives a homology element, which by the induction
argument, vanishes when raised to the $q_0$ power and multiplied
by $d$.  Equivalently, $dw^{q_0} \in \im(\beta_{i+1}^{[qq_0]}) +
x^{qq_0}P_i$. However, the argument still applies for any power of
$x$, so by the Krull intersection theorem, $dw^{q_0} \in
\im(\beta_{i+1}^{[qq_0]})$, as desired.

A consequence of the above argument is that if $M$ is any finitely
generated $R$-module, and $(P_\dt,\beta_\dt)$ is a resolution of
$M$, then  $P_\dt \otimes_R R^{eq}$ is stably phantom at the $i$th
spot. But for any $j \ge 0$, the homology at the $i+j$th spot is
the $i$th homology of the $j$th syzygy of $M$, and therefore,
$P_\dt \otimes_R R^{eq}$ is stably phantom at all spots above $i$
as well. By Theorem 2.1.7 of \cite{Ab1}, $(\syz_{i-1} M)/I(\syz_{i-1} M)$ has  a
finite phantom resolution over $R^{eq}$, and therefore, if we take
$P_\dt$ to be minimal (so $P_\dt \otimes R^{eq}$ is minimal), we
see that $P_\dt$ is bounded on the left. But this means $\pd_R
\left(\syz_{i-1} M\right) < \8$, and hence $\pd_R M < \8$.  Hence $R$ is
regular.
\end{proof}

\begin{cor}\label{mainthm}
Let $R$ be a local ring of characteristic $p$.  If $\gothic t_i(R)
= 0$ for some $i>0$ then $R$ is a regular local ring.
\end{cor}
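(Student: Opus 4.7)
The plan is to reduce Corollary \ref{mainthm} to Theorem \ref{mainprop} via Proposition \ref{compareReq}, with a small preliminary reduction to the excellent case by completion.

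First I would pass from $R$ to its $\m$-adic completion $\hat R$. Completion is faithfully flat and preserves the length of finite length modules, and the Frobenius functor commutes with the base change $R \to \hat R$ on finitely generated modules, so
\[
\length(\Tor_i^R(k, {}^{f^e}\!\!R)) \;=\; \length(\Tor_i^{\hat R}(k, {}^{f^e}\!\hat R))
\]
for every $e$. Therefore $\gothic t_i(R) = \gothic t_i(\hat R)$, and $R$ is regular if and only if $\hat R$ is. Since $\hat R$ is complete, hence excellent, I may as well assume $R$ itself is excellent.

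Next I would let $G_\dt$ be a minimal free resolution of $k$ over $R$. This is a left complex of finitely generated free modules whose only homology is $k$ in degree $0$, which is of finite length, so the hypotheses of Proposition \ref{compareReq} are satisfied. Because $H_i(F^e(G_\dt)) \isom \Tor_i^R(k, {}^{f^e}\!\!R)$, the assumption $\gothic t_i(R)=0$ is exactly the statement
\[
\lim_{q\to\8} \frac{\length(H_i(F^e(G_\dt)))}{q^d} \;=\; 0.
\]
Proposition \ref{compareReq} then forces $G_\dt \otimes_R R^{eq}$ to be stably phantom at the $i$-th spot. Finally, Theorem \ref{mainprop} applies to the excellent local ring $R$ with this resolution and yields that $R$ is regular.

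I do not anticipate any real obstacle: once the reduction to completion is made, the corollary is an immediate translation of Theorem \ref{mainprop} through the dictionary furnished by Proposition \ref{compareReq}. The only item deserving verification is the descent of $\gothic t_i$ along $R \to \hat R$, which rests on the fact that both length of finite length modules and the Frobenius functor behave well under the flat extension to the completion.
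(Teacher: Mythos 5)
Your proof is correct and follows the same route as the paper's: reduce to the complete (hence excellent) case, observe that the hypothesis $\gothic t_i(R)=0$ is preserved under completion, apply Proposition \ref{compareReq} to get that $G_\dt\otimes_R R^{eq}$ is stably phantom at the $i$th spot, and conclude by Theorem \ref{mainprop}. The only difference is that you spell out the (standard) verification that $\gothic t_i$ descends along $R\to\hat R$, which the paper compresses into the phrase ``the hypothesis is stable under completion.''
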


\begin{proof}
The hypothesis is stable under completion, and $R$ is regular if and
only if $\hat R$ is, so we may assume that $R$ is complete.

By Proposition~\ref{compareReq}, $G_\dt \otimes R^{eq}$ has stably
phantom homology at the $i$th spot.  Thus, by Proposition~\ref{mainprop},
$R$ is regular.
\end{proof}

\begin{rmk} Regarding Theorem \ref{mainprop}, one should not
expect more generally that if a resolution of an arbitrary module
$M$ has stably phantom homology at the $i$th spot for some $i>0$,
then $\pd M<\8$, although this is the case when $M$ is of finite
length over a local complete intersection \cite{M}. The reader
should refer to \cite{LM} for an explicit example.
\end{rmk}

As an immediate consequence of \ref{mainprop}, we have the
following improved version of a theorem due to Bridgeland and
Iyengar in the characteristic $p$ case,
\begin{thm}\label{BI}
Let $(R,\m,k)$ be a $d$-dimensional local ring in characteristic
$p>0$. Assume $C_\dt$ is a complex of free $R$-modules with
$C_i=0$ for $i \notin [0, d]$, the $R$-module $H_0(C_\bullet)$ is
finitely generated, and $\length(H_i(C_\bullet))<\infty$ for
$i>0$. If $k$ or any syzygy of $k$ is a direct summand of
$H_0(C_\bullet)$, then $R$ is regular.
\end{thm}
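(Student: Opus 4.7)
By Corollary \ref{mainthm}, it suffices to produce some $i>0$ with $\gothic t_i(R)=0$. Both the hypothesis and the conclusion are preserved under passage to the completion $\widehat R$, so I may assume $R$ is complete local; the dimension shift $\Tor_n^R(\syz_j(k),-)\cong\Tor_{n+j}^R(k,-)$ for $n\ge 1$ reduces the ``syzygy'' case to the case where $k$ itself is a direct summand of $H_0(C_\dt)$. If $d=0$ then $C_\dt=C_0$ is free and $k$ being a summand of a free module over an Artinian local ring forces $R$ to be a field, so assume $d\ge 1$.

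Since each $H_q(C_\dt)$ for $1\le q\le d$ has finite length, a single power $\m^t$ annihilates all of them, and hence $\m^t$ kills every $\Tor_p^R(H_q(C_\dt),F^e R)$ with $q\ge 1$, $p,e\ge 0$. Since $d\ge 1$, $\m^t$ is not contained in any minimal prime, and prime avoidance yields $c\in\m^t\cap R^o$; this single $c$ annihilates all of these Tor modules. Now consider the hyper-Tor spectral sequence
\[E^2_{p,q}=\Tor_p^R(H_q(C_\dt),F^e R)\Longrightarrow H_{p+q}(F^e C_\dt),\]
whose abutment vanishes in total degree $>d$ because $C_\dt$ is supported in $[0,d]$. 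For each $r\ge 2$, the differential $d_r\colon E^r_{n,0}\to E^r_{n-r,r-1}$ lands in a subquotient of the $c$-annihilated module $\Tor_{n-r}^R(H_{r-1}(C_\dt),F^e R)$, so $c\cdot E^r_{n,0}\subseteq\ker(d_r)=E^{r+1}_{n,0}$. Only columns with $0\le q\le d$ are nonzero, so the sequence stabilizes at page $d+2$; iterating the inclusion $d$ times produces $c^d\cdot E^2_{n,0}\subseteq E^\infty_{n,0}=0$ for every $n>d$. Thus $c^d\Tor_n^R(H_0(C_\dt),F^e R)=0$, and since $k$ is a summand of $H_0(C_\dt)$, also $c^d\Tor_n^R(k,F^e R)=0$ for every $n>d$ and $e\ge 0$.

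Let $G_\dt$ be a minimal free resolution of $k$. Tensoring $F^e G_\dt$ with the (always exact) short exact sequence $0\to R/(0:c^d)\xrightarrow{c^d}R\to R/c^d R\to 0$ and using $c^d H_{d+1}(F^e G_\dt)=0$ produces an injection
\[H_{d+1}(F^e G_\dt)\hookrightarrow H_{d+1}\bigl(F^e G_\dt\otimes_R R/c^d R\bigr).\]
Every minimal prime of $R/c^d R$ strictly contains a minimal prime of $R$ (as $c\in R^o$), and since $R$ is complete local, hence catenary, $\dim(R/c^d R)\le d-1$. Proposition \ref{prop:seibert} applied with $N=R/c^d R$ therefore gives $\length(H_{d+1}(F^e G_\dt\otimes_R R/c^d R))=O(q^{d-1})=o(q^d)$, so $\gothic t_{d+1}(R)=0$, and Corollary \ref{mainthm} concludes that $R$ is regular. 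The critical step is the spectral sequence iteration producing the single $c\in R^o$ that annihilates $\Tor_n^R(H_0(C_\dt),F^e R)$ uniformly in $e$; after that, Seibert's length estimate turns the annihilation into the desired asymptotic vanishing essentially for free.
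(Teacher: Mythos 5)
Your overall strategy (hyper-Tor spectral sequence over $F^e R$ to extract a uniform annihilator $c^d$ of $\Tor_n^R(k,F^eR)$ for $n>d$, then Seibert's estimate to turn the annihilation into asymptotic vanishing and invoke Corollary~\ref{mainthm}) is the natural one, and it is presumably in the spirit of the proof in~\cite{Li} that the paper cites. The spectral sequence step is correct: the differential bookkeeping, the stabilization at page $d+2$, and the conclusion $c^d E^2_{n,0}\subseteq E^\8_{n,0}=0$ for $n>d$ all check out, and the reductions (completion, $d\ge 1$, the dimension shift for syzygies) are fine modulo minor rewording.

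The gap is the claimed injection
\[
H_{d+1}(F^e G_\dt)\hookrightarrow H_{d+1}\bigl(F^e G_\dt\otimes_R R/c^d R\bigr).
\]
From the long exact sequence of $0\to R/(0:c^d)\xrightarrow{c^d}R\to R/c^dR\to 0$ tensored with the flat complex $F^eG_\dt$, the kernel of the map above is the image of $H_{d+1}(F^eG_\dt\otimes R/(0:c^d))\to H_{d+1}(F^eG_\dt)$. At the chain level this image consists of classes $[c^d z]$ where $z\in (F^eG_\dt)_{d+1}$ satisfies only $c^d\partial z=0$, not $\partial z=0$. Your hypothesis $c^dH_{d+1}(F^eG_\dt)=0$ controls $[c^dz]$ only when $z$ itself is a cycle, so it does not force the kernel to vanish. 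The two do coincide when $(0:c^d)=0$, i.e.\ when $c$ is a nonzerodivisor; but $c\in\m^t\cap R^o$ need not be a nonzerodivisor in $R$ (it only avoids minimal primes, and if $\depth R=0$ every element of $\m$ is a zerodivisor, so no choice of $c$ repairs this). This is precisely the difficulty the paper's machinery is built to dodge: Proposition~\ref{compareReq} and Theorem~\ref{mainprop} pass to $R^{eq}$, which is unmixed, so that the relevant element is a nonzerodivisor there. To close the gap you would need either to establish the annihilation statement over $R^{eq}$ (so that Theorem~\ref{mainprop} applies directly, but then you must also verify that $H_i(C_\dt\otimes R^{eq})$ still has finite length for $i>0$, which is not automatic), or to replace the injection by an argument that works when $c$ is a zerodivisor.
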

\begin{proof}The proof is the same as that of Theorem 3.1 in
\cite{Li}
\end{proof}
Since the original theorem of Bridgeland and Iyengar holds true in
the equicharacteristic case, we expect Theorem \ref{BI} also holds
in equicharacteristic case. But we do not have a proof.

As another corollary of Theorem \ref{mainprop}, we may also
characterize regularity for excellent local domains via vanishing
of higher Tor's of the residue field with $R^+$.

\begin{cor}\label{rplus} Let $(R,\m,k)$ be an excellent domain.  If $\Tor_i^R(k, R^+)
= 0$ for some $i>0$ then $R$ is regular.
\end{cor}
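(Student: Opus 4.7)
My plan is to apply Theorem~\ref{mainprop} to $R$ itself. Since $R$ is a domain we have $R=R^{eq}$, so the task reduces to showing that a minimal free resolution $G_\dt$ of $k$ over $R$ has stably phantom homology at the $i$th spot; equivalently, since $R$ admits a test element $c$, that $cH_i(F^e(G_\dt))=0$ for every $e\ge 0$.

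To transport the hypothesis $\Tor_i^R(k,R^+)=0$ into a statement about $F^e(G_\dt)$, I use the $R$-module isomorphism ${}^{f^e}R\cong \Rq$, $r\mapsto r^{1/q}$, which identifies $H_i(F^e(G_\dt))$ with $\Tor_i^R(k,\Rq)$; the inclusion $\Rq\hookrightarrow R^+$ then induces an $R$-linear map $\Tor_i^R(k,\Rq)\to\Tor_i^R(k,R^+)=0$. Thus a cycle $z\in G_i\otimes\Rq$ representing a class becomes a boundary in $G_\dt\otimes R^+$: there is some $w\in G_{i+1}\otimes R^+$ with $z=\partial w$. Writing $R^+$ as the filtered union of its module-finite domain sub-$R$-algebras, and using that only finitely many elements of $R^+$ appear in $z$ or in $w$, I find a single module-finite domain extension $R\subseteq S\subseteq R^+$ containing all of them, so that $z=\partial w$ already holds inside $G_\dt\otimes S$.

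The heart of the argument, and the main obstacle, is then to convert this ``plus-closure'' conclusion into a tight-closure statement, so that the test element annihilates the class of $z$. I would apply the Smith/Hochster--Huneke containment of plus closure inside tight closure to the finitely generated $\Rq$-module $G_i\otimes\Rq$ with submodule $\im(\partial_{i+1}\otimes 1)$ and the module-finite extension $\Rq\subseteq S$, obtaining $z\in\bigl(\im(\partial_{i+1}\otimes 1)\bigr)^*$ over $\Rq$. A test element $c$ of $R$ transfers to the test element $c^{1/q}\in\Rq$ under the ring isomorphism $R\to\Rq$, $a\mapsto a^{1/q}$; multiplying by $c^{(q-1)/q}\in\Rq$ then recovers $cz\in\im(\partial_{i+1}\otimes 1)$ inside $G_i\otimes\Rq$, which under $F^e(G_i)\cong G_i\otimes\Rq$ is precisely $cH_i(F^e(G_\dt))=0$, whereupon Theorem~\ref{mainprop} concludes that $R$ is regular. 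The subtlety here is that $\Rq$ is typically neither complete nor module-finite over $R$, so the Smith/Hochster--Huneke containment needs to be set up carefully---via a completion step and tight-closure persistence---before being applicable in this module-theoretic form over $\Rq$.
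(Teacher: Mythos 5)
Your core strategy is the same as the paper's: reduce to showing a test element $c$ kills $H_i(F^e(G_\dt))$ for every $e$; lift a cycle into $G_\dt\otimes_R R^+$ where, by $\Tor_i^R(k,R^+)=0$, it becomes a boundary; descend the boundary equation to a module-finite extension $S\subseteq R^+$; and invoke plus closure $\subseteq$ tight closure. So this is not a different route, but the bookkeeping you choose creates a real (if repairable) wrinkle. You construct $S$ as a module-finite extension of $R$ inside $R^+$ containing the finitely many coefficients of $w$, and then in the next breath speak of ``the module-finite extension $\Rq\subseteq S$''; that $S$ has no reason to contain $\Rq$, so the containment you want to quote is not yet available over the ring where you are trying to use it. You sense this (``$\Rq$ is typically neither complete nor module-finite over $R$''), but the fix you gesture at --- a completion step plus persistence --- is not the issue and would not by itself resolve it. The actual repair is easier and more local: replace $S$ by the $\Rq$-subalgebra of $R^+$ generated by those finitely many coefficients of $w$; since each is integral over $R\subseteq\Rq$ and $\Rq\cong R$ is a Noetherian excellent local domain, this new $S$ \emph{is} module-finite over $\Rq$, lies inside $(\Rq)^+=R^+$, and then Smith's containment, the transfer of $c$ to the test element $c^{1/q}$ of $\Rq$, and your multiplication by $c^{(q-1)/q}$ all go through as you describe.

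What the paper does instead is worth internalizing, because it removes the $\Rq$ detour entirely. From $w\in\ker(\al_i\brq)$ one gets $w^{1/q}\in\ker(\al_i\otimes R^+)=\im(\al_{i+1}\otimes R^+)$, hence $w^{1/q}=(\al_{i+1}\otimes S)(v)$ for some module-finite $R\subseteq S\subseteq R^+$. Now raise this equation componentwise to the $q$th power: $w=(\al_{i+1}^{[q]}\otimes S)(v^{[q]})$. This puts $w\in\im(\al_{i+1}^{[q]}\otimes_R S)\cap G_i\subseteq\bigl(\im\al_{i+1}^{[q]}\bigr)^*_{G_i}$ with \emph{everything over $R$} and $S$ module-finite over $R$ exactly as first constructed --- no need to discuss test elements of $\Rq$, module-finiteness of $\Rq$, or completion. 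The moral: when Frobenius tempts you into $\Rq$, check whether taking $q$th powers brings you back to $R$ before you commit to working over the larger ring.
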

\begin{proof}
Let $(G_\dt,\al_\dt)$ be a minimal free resolution of $k$.  It
suffices to show that $G_\dt$ is stably phantom at the $i$th spot,
so that Theorem~\ref{mainprop} applies.

Suppose $w \in \ker(\al_i\brq)$.  Then $w^{1/q} \in \ker(\al_i\otimes_R R^+)
= \im(\al_{i+1} \otimes_R R^+)$ (the second equality holds since
$\Tor_i^R(k, R^+)= 0$).  This can be expressed using only finitely
many elements of $R^+$, so there is a module finite extension $S \supseteq R$
such that $w \in \im(\al_{i+1}^{[q]}\otimes_R S) \cap G_i
\inc \im(\al_{i+1}^{[q]})^*_{G_i}$, as desired.
\end{proof}
\begin{rmk} \label{yao} By exactly the same argument, one can also show that for a reduced, excellent and equidimensional local ring $R$, if
$\Tor_i^R(k,R^\8)=0$ for some $i>0$, then $R$ is regular, where
$R^\8$ denotes $\cup_q R^{1/q}$.
\end{rmk}
\specialsection*{ACKNOWLEDGEMENT} We thank Yongwei Yao for
pointing out Remark \ref{yao}. We also thank the referee for the
thoughtful comments.

\end{document}